\chardef\bslash=`\\ 
\def\verbatim{\interlinepenalty\@M \@verbatim
\leftskip\@totalleftmargin\advance\leftskip2pc
\frenchspacing\@vobeyspaces \@xverbatim} \makeatother \hfuzz1pc
\def\dgt@k{\dg@DX=-3 \dg@DY=2 \dg@SIZE=3}
\def\dgt@kk{\dg@DX=3 \dg@DY=-1 \dg@SIZE=3}
\theoremstyle{plain} \newtheorem{thm}{Theorem}[section]
\newtheorem{lemma}[thm]{Lemma}
\theoremstyle{definition} \newtheorem{rem}[thm]{Remark}
 \newtheorem{que}[thm]{Question}
\newcommand{\mpcc}{\operatorname{\mathrm{mpcc}}}
\newcommand{\pr}{\mathrm{pr}}
\begin{document}

\title[Hyperspaces of max-plus convex subsets]
{Hyperspaces of max-plus convex subsets of powers of the real line}
\author[L. Bazylevych]{Lidia Bazylevych}
\address{National University ``Lviv Polytechnica'', 12 Bandery Str.,
79013 Lviv, Ukraine} \email{izar@litech.lviv.ua}

\author[D. Repov\v s]{Du\v san Repov\v s}
\address{Faculty of Education and
Faculty of Mathematics and Physics, 
University of Ljubljana, 
P.O. Box 2964,  Ljubljana, Slovenia 1001}
\email{dusan.repovs@guest.arnes.si}

\author[M. Zarichnyi]{Mykhailo Zarichnyi}
\address{Department of Mechanics and Mathematics,
Lviv National University, Universytetska Str. 1, 79000 Lviv,
Ukraine}
\address{Institute of Mathematics,
University of Rzesz\'ow, Rejtana 16 A, 35-310 Rze\-sz\'ow, Poland}
\email{topology@franko.lviv.ua}
\thanks{}
\subjclass[2010]{52A07, 54B20, 57N20}
\keywords{Max-plus convex set, hyperspace, absolute retract, powers of the real line}
\date{\today}


\begin{abstract} The notion of max-plus convex subset of Euclidean space can be naturally extended to other linear spaces.
The aim of this paper is to describe the topology of  hyperspaces of max-plus convex subsets of  Tychonov powers $\mathbb R^\tau$ of the real line. We show that the corresponding spaces are AR's if and only if $\tau\le\omega_1$.
\end{abstract}
\maketitle
\section{Introduction}
Max-plus convex sets were introduced in \cite{Zim}. Max-plus convex cones also appeared
in  idempotent analysis, after the observation
by Maslov that solutions of the
Hamilton-Jacobi equation associated
with a deterministic optimal control problem satisfy a ``max-plus'' superposition
principle and therefore
belong to structures similar to convex cones which are
called semimodules or idempotent linear spaces \cite{LMS}. In the last decade the interest in  max-plus convex sets increased 
due to
the development of the so-called ``idempotent mathematics'', which is a part of mathematics where usual arithmetic operations are replaced by idempotent operations.
Our paper is devoted to  hyperspaces of max-plus convex subsets in  products of real lines. The results of the first-named author cover the case of $\mathbb R^n$, $n\ge2$.

Topology of hyperspaces of compact and closed convex sets has been
investigated by several
authors. The classical result of Nadler,
Quinn and Stavrakas \cite{NQS} asserts that the hyperspace of
convex compact subsets of $\mathbb R^n$, $n\ge2$, is a
contractible $Q$-manifold homeomorphic to $Q\setminus\{*\}$ 
(recall that a $Q$-{\it manifold} is a
manifold modeled on the Hilbert cube $Q=[0,1]^\omega$). Their
result has
found many applications in convex geometry. In
particular, it enabled the proof that the hyperspace of all
compact strictly convex bodies is homeomorphic to the separable
Hilbert space $\ell^2$ (see \cite{B}). 
Hyperspaces of compact convex subsets of 
Tychonov cubes were investigated in \cite{ZI}.

Let $\mathbb R_{\max}=\mathbb R\cup\{-\infty\}$ and let $\tau$ be a cardinal number. Given $x,y\in\mathbb R^\tau$ and
$\lambda\in\mathbb R$, we denote by $x\oplus y$ the coordinatewise
maximum of $x$ and $y$ and by $\lambda\odot x$ the vector obtained
from $x$ by adding $\lambda$ to every its coordinate.   A subset
$A$ in $\mathbb R^\tau$ is said to be {\em tropically convex} (or {\em max-plus convex}) if
$\alpha\odot a\oplus\beta\odot b\in A$ for all $a,b\in A$ and
$\alpha,\beta\in\mathbb R_{\max}$ with $\alpha\oplus\beta=0$.

We denote  the hyperspace of all nonempty
max-plus convex compact subsets in $\mathbb R^\tau$ by $\mpcc(\mathbb R^\tau)$. Note that every
max-plus convex compact subset in $\mathbb R^\tau$ is a
subsemilattice of  $\mathbb R^\tau$ with respect to the operation
$\oplus$. In particular, $\max A\in A$, for any max-plus convex
compact subset $A$ in $\mathbb R^\tau$.

Tychonov powers $\mathbb R^\tau$, for $\tau>\omega$, are the
main geometric objects of the theory of noncompact nonmetrizable absolute extensors.
The main result of our paper is that the hyperspace of 
max-plus convex subsets in the spaces $\mathbb R^\tau$  is homeomorphic to $\mathbb R^\tau$ if $\tau\in\{\omega,\omega_1\}$.

\section{Preliminaries}

The set $\mathbb R\cup\{-\infty\}$ is endowed with the metric
$\varrho$, $\varrho(x,y)=|e^x-e^y|$ (conventions: $e^{-\infty}=0$
and
$\ln0=-\infty$).
 We denote the set of all nonempty compact subsets of a
metric space $(X,d)$ by $\exp X$. The base of the {\it Vietoris topology} on $\exp X$ consists of
the sets of the form $$\langle U_1,\dots,U_n\rangle=\{A\in\exp X\mid A\subset \cup_{i=1}^nU_i,\ A\cap U_i\neq\emptyset\text{, for all }i=1,\dots,n\},$$
where $U_1,\dots,U_n$ run over the topology of $X$.

If $X$ is a metric space, then one can endow $\exp X$ with the {\it Hausdorff metric}
$d_H$:
$$d_H(A,B)=\inf\{\varepsilon>0\mid A\subset O_\varepsilon(B),\ B\subset O_\varepsilon(A)\}$$
(hereafter, $O_r(C)$ will denote the $r$-neighborhood of $C\in\exp X$). It is well-known that equivalent metrics on $X$ generate equivalent Hausdorff metrics on $\exp X$.

 By ANR (resp., AR) we shall 
 denote the class of {\it absolute neighborhood retracts}
for the class of metrizable spaces, i.e. the class of metrizable spaces $X$ satisfying the following property: for every embedding $i\colon X\to Y$ into a metrizable space $Y$ there exists a retraction of a neighborhood of $i(X)$ in $Y$ (resp., a retraction of $Y$) onto $i(X)$.

We say that a metric space $X$ satisfies the {\em strong discrete
approximation property} (SDAP) if for every continuous function
$\varepsilon\colon X\to(0,\infty)$ and every map $f\colon \coprod_{n=1}^\infty I^n\to X$  there exists a map
$g\colon \coprod_{n=1}^\infty I^n\to X$ such that $d(f(x),g(x))<\varepsilon(x)$, $x\in \coprod_{n=1}^\infty I^n$, and the family $\{g(I^n)\mid n\in\mathbb N\}$ is discrete ($d$ denotes the metric on $X$).
The following is a characterization theorem for $\ell^2$-manifolds.

\begin{thm}[Toru\'nczyk \cite{T}] A complete separable nowhere locally compact ANR $X$ is an $\ell^2$-manifold
if  and only if $X$ satisfies the SDAP.
\end{thm}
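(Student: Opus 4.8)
This is Toru\'nczyk's topological characterization of $\ell^2$-manifolds, a deep result of infinite-dimensional topology; since it is quoted from \cite{T} we use it only as a black box, and what follows is just a sketch of the shape of its proof. There are two implications. The necessity, that an $\ell^2$-manifold satisfies SDAP, is the easier half: the property is local and topologically invariant, so it suffices to verify it for $X=\ell^2$ itself. Given $f\colon\coprod_{n=1}^\infty I^n\to\ell^2$ and a continuous $\eps\colon\ell^2\to(0,\infty)$, one redefines $f$ independently on each cube — legitimate, since $\coprod_n I^n$ is a topological sum and no global compatibility is imposed. Each $f(I^n)$ is compact, so $\eps$ is bounded below on it by some $c_n>0$; exploiting the infinitely many independent coordinate directions of $\ell^2$, one pushes each $f(I^n)$ by less than $\eps(f(\cdot))$ into a region built around a vector in a fresh coordinate direction $e_{k(n)}$, with the indices $k(n)$ chosen so spread out that the resulting compacta $g(I^n)$ become pairwise positively separated and the family $\{g(I^n)\}$ is discrete. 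In $\ell^2$ every compactum is automatically a $Z$-set, so nothing further is needed, and routine bookkeeping keeps every move $\eps$-controlled.

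The sufficiency is the heart of the theorem. The strategy has two stages: (i) the absolute case — a complete separable \emph{contractible} ANR (i.e.\ an AR) with SDAP is homeomorphic to $\ell^2$; and (ii) the manifold case, deduced by localization, since an ANR is locally an AR, complete metrizability and non-local-compactness pass to suitable open neighborhoods, SDAP restricts appropriately, and hence every point of $X$ has a neighborhood homeomorphic to $\ell^2$, which is precisely what it means for $X$ to be an $\ell^2$-manifold. For (i) the classical route itself has two steps. First one shows, using SDAP purely as a general-position device, that the projection $X\times\ell^2\to X$ is a near-homeomorphism: it is approximated by homeomorphisms assembled from discrete-approximation data, an $\ell^2$-analogue of the $X\times Q\cong X$ stabilization phenomenon, proved via a Bing-type shrinking criterion together with $Z$-set unknotting in $\ell^2$. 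This yields $X\cong X\times\ell^2$. Secondly, $X\times\ell^2$ is a contractible $\ell^2$-manifold, and by the classification of $\ell^2$-manifolds up to homotopy type (Henderson), together with the Anderson--Kadec identification $\ell^2\cong\mathbb R^\omega$, it is homeomorphic to $\ell^2$; chaining the two homeomorphisms gives $X\cong\ell^2$.

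The main obstacle is exactly the stabilization step: extracting an \emph{actual} homeomorphism $X\cong X\times\ell^2$ from the purely abstract hypothesis SDAP. All of the real work sits there — one needs the full apparatus of $Z$-sets and $Z$-set absorption, skeletoids, and limit theorems for sequences of fine homotopy equivalences, with delicate control of a complete metric so that the limiting map is genuinely bijective rather than merely a fine homotopy equivalence. We reproduce none of this. In the sequel we only verify, for the hyperspaces of max-plus convex sets under consideration, that they are ANR's, that they are complete, separable and nowhere locally compact, and that they satisfy SDAP, after which Toru\'nczyk's theorem above applies verbatim.
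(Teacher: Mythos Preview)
The paper gives no proof of this theorem: it is stated purely as a citation from \cite{T} and used as a black box, so there is nothing in the paper to compare your sketch against. Your outline of the two implications --- necessity by a direct coordinate-pushing argument in $\ell^2$, sufficiency via the stabilization $X\cong X\times\ell^2$ followed by the homotopy classification of $\ell^2$-manifolds --- is a faithful summary of the architecture of Toru\'nczyk's original argument, and your disclaimer that the hard analytic content (shrinking, $Z$-set absorption, limit control) is omitted is appropriate.

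One correction, however, concerns your final paragraph. You write that ``in the sequel we only verify \dots\ that they satisfy SDAP, after which Toru\'nczyk's theorem above applies verbatim.'' That is not how the paper actually proceeds. The paper never verifies SDAP for $\mpcc(\mathbb R^\omega)$ and never invokes this theorem directly. Instead it applies the \emph{fibered} version, Theorem~\ref{t:bundle} (Toru\'nczyk--West), by checking softness and the fiberwise discrete approximation property (FDAP) for the limit projection of the inverse sequence~(\ref{e:1}); this yields a trivial $\ell^2$-bundle structure, from which $\mpcc(\mathbb R^\omega)\simeq\mpcc(\mathbb R^\omega)\times\ell^2\simeq\ell^2$ follows. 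So your sketch of Toru\'nczyk's theorem is fine as background, but the concluding sentence misrepresents the logical role this theorem plays in the paper.
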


Recall that a map $f\colon X\to Y$ is called  {\em soft}
\cite{Shch}, if for every commutative diagram
\begin{equation}\label{d:1} \xymatrix{A\ar@{^{(}->}[d]\ar[r]^\varphi&X\ar[d]^f\\
Z\ar[r]_\psi& Y}\end{equation} such that $A$ is a closed subset of a paracompact space $Z$, there exists a map $\Phi\colon Z\to X$ such that
$f\Phi=\psi$ and $\Phi|A=\varphi$.

A {\em trivial $\ell^2$-bundle} is a map $f\colon X\to Y$
which is homeomorphic to the projection $Y\times M\to Y$ onto the
first factor, where $M$ is  $\ell^2$.
A map $f\colon X\to Y$ of  metric spaces is said to satisfy
the {\em fiberwise discrete approximation property} if for every map $g\colon \coprod_{n=1}^\infty I^n\to X$ and every continuous function
$\varepsilon\colon X\to(0,\infty)$ there is a map $h\colon \coprod_{n=1}^\infty I^n\to X$   such that $d(f(x),g(x))<\varepsilon(x)$, $x\in \coprod_{n=1}^\infty I^n$, and
\begin{enumerate}
\item $fg=fh$; and
\item the family $\{h(I^n)\mid i\in\mathbb N\}$ is discrete.
\end{enumerate}
The following result was cited in \cite{Chi} and was attributed to Toru\'nczyk and West.

\begin{thm}[Toru\'nczyk-West characterization theorem for $\mathbb R^\omega$-manifold
bundles]\label{t:bundle} A map $f\colon X\to Y$ of complete metric
ANR-spaces is a trivial $\mathbb R^\omega$ if $f$ is soft and $f$
satisfies the fiberwise discrete approximation property (FDAP).
\end{thm}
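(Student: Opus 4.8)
The plan is to deduce Theorem~\ref{t:bundle} from the non-parametric characterization of $\ell^2$-manifolds quoted above, treating the two hypotheses as complementary tools: softness of $f$ supplies every extension and lifting one needs, while the fiberwise discrete approximation property supplies the general-position moves that drive Toru\'nczyk's construction, now carried out fiberwise over $Y$.

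First I would analyze a single fiber $F_y=f^{-1}(y)$, $y\in Y$. As a closed subspace of the complete metric ANR $X$, it is completely metrizable. Feeding the definition of softness a constant map $\psi\colon Z\to Y$ of value $y$, with $Z$ paracompact and $A\subseteq Z$ closed, shows that $F_y$ is an absolute extensor for metrizable spaces, hence an AR. Moreover the FDAP restricts to the SDAP on $F_y$: for any $g\colon\coprod_{n=1}^\infty I^n\to F_y\subseteq X$ and any gauge $\varepsilon$, the map $h$ produced by the FDAP satisfies $fh=fg\equiv y$, so $h$ takes values in $F_y$, and a family discrete in $X$ is a fortiori discrete in $F_y$; the same observation shows that $F_y$ is nowhere locally compact (a discrete infinite family of nonempty sets cannot lie inside a compact set). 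By Toru\'nczyk's theorem (or its non-separable counterpart) $F_y$ is therefore an $\ell^2$-manifold, and, being an AR, it is homeomorphic to $\ell^2\cong\mathbb R^\omega$.

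The second, and main, step is to upgrade this fiberwise conclusion to a statement about $f$ itself; this is precisely the content of the Toru\'nczyk-West parametrized theory. One reruns the inductive construction underlying the absolute characterization theorem, but over the base: at each stage one has a fiber-preserving map defined on a closed subset of $X$ that must be extended over all of $X$ with small control --- exactly a soft-lifting problem solved by the hypothesis on $f$ --- and one must push maps of the cubes $I^n$ into fiberwise-discrete position with control measured by $\varepsilon\colon X\to(0,\infty)$, which is exactly the FDAP. Since softness lets these extensions be carried out over all of $Y$ at once, the resulting convergent sequence of fiber-preserving homeomorphisms assembles into a single homeomorphism $X\cong Y\times\mathbb R^\omega$ over $Y$, that is, $f$ is a trivial $\mathbb R^\omega$-bundle, with no separate local-to-global patching step. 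The hard part is exactly this parametrization: every ingredient of Toru\'nczyk's proof --- locally finite covers, convergent sequences of homeomorphisms, $Z$-set unknotting --- must be made to depend continuously on $y\in Y$ with all estimates measured by the variable gauge rather than by constants, and arranging the bookkeeping so that the extensions furnished by softness remain mutually compatible from stage to stage --- so that the infinite composition converges over all of $Y$ simultaneously --- is the technical core of the argument.
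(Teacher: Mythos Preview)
The paper does not prove Theorem~\ref{t:bundle}; it merely quotes it, attributing it to Toru\'nczyk and West and citing \cite{Chi}. There is therefore no ``paper's own proof'' to compare your proposal against.

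As for the proposal itself: your first step is sound. Softness of $f$ with a constant target map does exhibit each fiber $F_y$ as an absolute extensor, hence an AR; the FDAP does restrict to the SDAP on $F_y$ (after extending the gauge $\varepsilon$ from $F_y$ to $X$ by Tietze); and SDAP together with the AR property forces $F_y$ to be nowhere locally compact, so Toru\'nczyk's characterization applies and each fiber is a copy of $\ell^2\cong\mathbb R^\omega$.

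Your second step, however, is not a proof but a programme. Saying ``rerun Toru\'nczyk's construction over the base, using softness for the extensions and FDAP for the general-position moves'' names the right ingredients, but the substance of the Toru\'nczyk--West theorem lies precisely in showing that these ingredients can be organized so that the successive fiber-preserving near-homeomorphisms converge \emph{uniformly over $Y$} to a global trivialization. You acknowledge this (``the hard part is exactly this parametrization''), but acknowledging a difficulty is not the same as overcoming it: nothing in your sketch explains, for instance, how to obtain fiberwise $Z$-set unknotting with control, or why the infinite composition of near-homeomorphisms stabilizes. What you have written is an accurate outline of the strategy behind the cited result, not an independent proof of it; for the purposes of this paper that is immaterial, since the theorem is invoked as a black box.
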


The following notion was introduced in \cite{Ho}: a {\em
$c$-structure} on a topological space $X$ is an assignment, to
every nonempty finite subset $A$ of $X$, of a contractible subspace
$F(A)$ of $X$, such that $F(A)\subset F(A')$, whenever $A\subset
A'$. A pair $(X,F)$, where $F$ is a $c$-structure on $X$, is called
a {\em $c$-space}. A subset $E$ of $X$ is called an {\em $F$-set}
if $F(A)\subset E$ for any finite $A\subset E$. A metric space
$(X,d)$ is said to be a {\em metric $l.c.$-space} if all the open
balls are $F$-sets and all open $r$-neighborhoods of $F$-sets are
also $F$-sets.

The following is generalization of the Michael Selection Theorem 
for generalized convexity structures,
see \cite{Ho1} for the proof. Recall that a multivalued map
$F\colon X\to Y$ of topological spaces is called {\em lower
semicontinuous} if, for any open subset $U$ of $Y$, the  set
$\{x\in X\mid F(x)\cap U\neq\emptyset\}$ is open in $X$. A {\em
selection} of a multivalued map $F\colon X\to Y$ is a
(single-valued) map $f\colon X\to Y$ such that $f(x)\in F(x)$, for
every $x\in X$.
The following  was proved in \cite{Ho1} (see the second corollary
of Theorem 2).

\begin{thm}\label{t:ho1} Let $(X,d,F)$ be a metric $l.c.$-space. Then
$X$ is an AR.
\end{thm}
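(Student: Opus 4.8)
The plan is to check the extension-property characterization of absolute retracts: since $X$ is metrizable, $X\in\mathrm{AR}$ as soon as every continuous map into $X$ from a closed subset of a metrizable space extends over the whole space. So fix a metrizable space $Z$, a closed set $A\subseteq Z$ and a continuous map $f\colon A\to X$; we must produce a continuous extension $\bar f\colon Z\to X$.

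The subspace $Z\setminus A$ is metrizable, hence paracompact, so we may choose a locally finite open cover $\{U_i\}_{i\in I}$ of $Z\setminus A$ with a subordinate partition of unity $\{\varphi_i\}$, and -- by the usual Dugundji device -- points $a_i\in A$ such that $d(z,a_i)\le 2\,d(z,A)$ for all $z\in U_i$. The partition of unity gives the canonical map $\kappa\colon Z\setminus A\to|N|$ to the realization of the nerve $N$ of $\{U_i\}$, a locally finite simplicial complex. We then construct a map $\nu\colon|N|\to X$ by induction over the skeleta: each vertex $i$ is sent to a point of $F(\{f(a_i)\})$, and, assuming $\nu$ has been defined on the $(k-1)$-skeleton so that the closed face on vertices $i_0,\dots,i_{k-1}$ is carried into $F(\{f(a_{i_0}),\dots,f(a_{i_{k-1}})\})$, we extend $\nu$ over each $k$-simplex $\sigma$ on vertices $i_0,\dots,i_k$ as follows: by monotonicity of $F$ the already-defined boundary map $\partial\sigma\to X$ has image in $C_\sigma:=F(\{f(a_{i_0}),\dots,f(a_{i_k})\})$; this set is contractible (it is a member of the $c$-structure) and $\partial\sigma$ is a sphere, so the boundary map extends to a map $\sigma\to C_\sigma$. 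Put $\bar f:=\nu\circ\kappa$ on $Z\setminus A$ and $\bar f:=f$ on $A$. Then $\bar f$ is continuous on the open set $Z\setminus A$, and by construction $\bar f(z)\in F(\{f(a_i):\varphi_i(z)>0\})$ for each $z\in Z\setminus A$.

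It remains to verify continuity of $\bar f$ at each $a\in A$, and this is the one step that genuinely uses the hypothesis that $(X,d,F)$ is a metric $l.c.$-space; it is also where I expect the only real difficulty. Given $\varepsilon>0$, choose $\eta>0$ with $d(f(a),f(a'))<\varepsilon$ whenever $a'\in A$ and $d(a,a')<\eta$. If $z\in Z\setminus A$ with $d(z,a)<\eta/3$, then every $i$ with $\varphi_i(z)>0$ satisfies $z\in U_i$, hence $d(z,a_i)\le2\,d(z,A)\le2\,d(z,a)<2\eta/3$, so $d(a,a_i)<\eta$ and $f(a_i)\in B(f(a),\varepsilon)$; since open balls are $F$-sets, the open ball $B(f(a),\varepsilon)$ contains the $F$-hull of each of its finite subsets, so $\bar f(z)\in F(\{f(a_i):\varphi_i(z)>0\})\subseteq B(f(a),\varepsilon)$. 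Combined with the obvious estimate for $z\in A$ near $a$, this proves continuity of $\bar f$ at $a$, so $\bar f$ is the desired extension and $X\in\mathrm{AR}$. Equivalently, one can phrase the argument as an application of the generalized Michael selection theorem of \cite{Ho1} to the lower semicontinuous set-valued map that equals $X$ on $Z\setminus A$ and $\{f(a)\}$ on $A$; in that formulation the second part of the $l.c.$ axiom -- open $r$-neighborhoods of $F$-sets are again $F$-sets -- is what drives the (now iterated, error-shrinking) selection construction when the target values are allowed to vary. The essential obstacle in either form is the same: one must force the skeleton-by-skeleton filling inside the $F$-hulls to stay within prescribed small balls near $A$, and it is precisely the $l.c.$ compatibility between the $c$-structure and the metric that makes this possible.
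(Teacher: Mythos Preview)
The paper does not prove this theorem at all; it simply quotes it from \cite{Ho1} (the second corollary of Theorem~2 there) as a tool for Section~3, so there is no in-paper argument to compare against. Your Dugundji-type nerve construction is correct and is the standard route to this result---the skeletal filling uses only contractibility and monotonicity of the $c$-structure, while continuity at points of $A$ uses exactly the $l.c.$ axiom that open balls are $F$-sets---and it is in substance the method of Horvath's paper, so your proposal matches what the authors are invoking by citation.
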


\begin{thm}\label{t:ho} Let $(X,d,F)$ be a complete metric $l.c.$-space. Then
any lower semicontinuous multivalued map $T\colon Y\to X$ of a
paracompact space $Y$ whose values are nonempty closed $F$-sets,
has a continuous selection.
\end{thm}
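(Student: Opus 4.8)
The plan is to mimic Michael's proof of the convex‑valued selection theorem, substituting for vector‑space convex combinations an inductive construction over the nerve of a locally finite open cover that exploits the contractibility of the sets $F(A)$, and using the two axioms of a metric $l.c.$-space — open balls are $F$-sets, and open $r$-neighbourhoods of $F$-sets are $F$-sets — in place of the linear estimates. Throughout, $d$ is the metric on $X$ and $O_r(\cdot)$ denotes an open $r$-neighbourhood.

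\textbf{Step 1 (approximate selections).} Fix a small $\delta>0$ and, optionally, a continuous map $h\colon Y\to X$ with $d(h(y),T(y))<\varepsilon$ for a constant $\varepsilon>0$. For each $y\in Y$ choose $p_y\in T(y)$ with $d(p_y,h(y))<\varepsilon$, and set $O_y=\{y'\in Y: T(y')\cap B(p_y,\delta)\ne\emptyset\}\cap\{y': d(p_y,h(y'))<\varepsilon\}$; by lower semicontinuity of $T$ and continuity of $h$ this is open and contains $y$. By paracompactness pick a locally finite partition of unity $\{\kappa_i\}_{i\in I}$ subordinate to a refinement $\{U_i\}$ with $U_i\subseteq O_{y_{j(i)}}$, put $p_i=p_{y_{j(i)}}$, let $K$ be the nerve of $\{U_i\}$, and let $\pi\colon Y\to|K|$, $\pi(y)=\sum_i\kappa_i(y)e_i$, be the canonical map. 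Build $g\colon|K|\to X$ by induction on skeleta: $g(e_i)=p_i$, and if $g$ is defined on the $(k-1)$-skeleton so that it carries every face of a simplex $\sigma=[i_0,\dots,i_k]$ into $F(\{p_{i_0},\dots,p_{i_k}\})$ (true on vertices, and propagated by monotonicity of $F$), then $g|_{\partial\sigma}$ maps $\partial\sigma\cong S^{k-1}$ into the contractible set $F(\{p_{i_0},\dots,p_{i_k}\})$ and extends over $\sigma$ into that set; local finiteness of $\{U_i\}$ makes $g\circ\pi$ continuous, since locally $\pi$ lands in a finite subcomplex. Put $f=g\circ\pi$. For each $y$, $f(y)\in F(\{p_i:\kappa_i(y)>0\})$, and every such $i$ satisfies $d(p_i,T(y))<\delta$ and $d(p_i,h(y))<\varepsilon$; hence this finite set lies in $O_\delta(T(y))$, an $F$-set because $T(y)$ is an $F$-set, and in the ball $B(h(y),\varepsilon)$, an $F$-set by the ball axiom. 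Therefore $f(y)$ lies in both, so $d(f(y),T(y))<\delta$ and $d(f(y),h(y))<\varepsilon$.

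\textbf{Step 2 (iteration and limit).} Apply Step 1 to $T$ with $\delta=2^{-1}$ and no $h$ to get a continuous $f_1$ with $d(f_1(y),T(y))<2^{-1}$ for all $y$. Recursively, given $f_n$ with $d(f_n(y),T(y))<2^{-n}$, apply Step 1 with $h=f_n$, $\varepsilon=2^{-n}$, $\delta=2^{-n-1}$ to obtain $f_{n+1}$ with $d(f_{n+1}(y),T(y))<2^{-n-1}$ and $d(f_{n+1}(y),f_n(y))<2^{-n}$ for all $y$. The sequence $(f_n)$ is uniformly Cauchy, so by completeness of $X$ it converges uniformly to a continuous $f\colon Y\to X$; since $d(f(y),T(y))\le d(f(y),f_n(y))+d(f_n(y),T(y))\to0$ and each $T(y)$ is closed, $f(y)\in T(y)$, so $f$ is the required selection.

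\textbf{Main obstacle.} The delicate step is Step 1: replacing "average the chosen points against the partition‑of‑unity weights" by the nerve construction, i.e. producing a globally defined continuous $g\colon|K|\to X$ whose value over each simplex is trapped in the associated $F$-set. The contractibility clause in the definition of a $c$-structure is exactly what powers the skeleton‑by‑skeleton extension, and the two $l.c.$-axioms are precisely what convert "$f(y)$ lies in $F$ of nearby points" into the metric estimates driving the iteration; the one genuinely technical point is checking continuity of $g\circ\pi$ in the metric topology, which follows from local finiteness of the cover.
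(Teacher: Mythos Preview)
The paper does not itself prove this theorem; it is quoted from Horvath \cite{Ho1} (see the sentence preceding the statement). Your argument is the standard Michael-style proof adapted to $c$-structures --- approximate selections built over the nerve of a locally finite cover by skeleton-by-skeleton extension using contractibility of the sets $F(A)$, with the two $l.c.$-axioms supplying the metric control, then iterated and passed to the uniform limit via completeness --- which is precisely the approach of \cite{Ho1}; the sketch is correct, with the understanding that in the first application of Step~1 the $h$-clause in the definition of $O_y$ is simply omitted.
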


\section{Two lemmas}

Recall that the countable power $\mathbb R^\omega$  of the real line $\mathbb R$ is homeomorphic to the pseudo-interior $s$ of the Hilbert cube $Q$ as well as, by the Anderson-Kadec theorem, to the separable Hilbert space $\ell^2$. We shall consider the following metric $\varrho$ on $\mathbb R$: $$\varrho(x,y)=\min\{|x-y|,1\}.$$ We shall define a metric $d$ on the countable power $\mathbb R^\omega$ by the formula $$d((x_i)_{i=1}^\infty,(y_i)_{i=1}^\infty)=\max_{i\in\mathbb N}\frac{\varrho(x_i,y_i)}{2^i}.$$ Note that $d$ is a complete metric generating the Tychonov topology on $\mathbb R^\omega$.

\begin{lemma}\label{l:1} The space $\mpcc(\mathbb R^\omega)$ is an absolute retract.
\end{lemma}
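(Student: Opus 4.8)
The plan is to apply Theorem~\ref{t:ho1} by equipping $\mpcc(\mathbb R^\omega)$ with a suitable $c$-structure making it a metric $l.c.$-space. The natural candidate for the $c$-structure is the max-plus convex hull: for a finite collection $\mathcal A = \{A_1,\dots,A_k\}$ of elements of $\mpcc(\mathbb R^\omega)$, set
$$F(\mathcal A) = \Bigl\{\, \textstyle\bigoplus_{i=1}^k \lambda_i\odot B_i \;\Big|\; B_i\in\mpcc(\mathbb R^\omega),\ B_i\subseteq A_i,\ \lambda_i\in\mathbb R_{\max},\ \textstyle\bigoplus_i\lambda_i = 0 \,\Bigr\},$$
or, more simply, the set of all max-plus convex combinations $\lambda_1\odot A_1\oplus\dots\oplus\lambda_k\odot A_k$ with $\bigoplus\lambda_i=0$, where $\lambda\odot A = \{\lambda\odot a : a\in A\}$ and $\oplus$ of sets is taken pointwise. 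First I would check that each $\lambda_1\odot A_1\oplus\dots\oplus\lambda_k\odot A_k$ is again a nonempty max-plus convex compact subset of $\mathbb R^\omega$ (compactness since $\oplus\colon\mathbb R^\omega\times\mathbb R^\omega\to\mathbb R^\omega$ and scalar multiplication are continuous and the set of admissible $\lambda$ with $\bigoplus\lambda_i=0$ is compact in $\mathbb R_{\max}^k$; max-plus convexity by a direct computation using the semimodule identities), so $F(\mathcal A)\subseteq\mpcc(\mathbb R^\omega)$. Monotonicity $F(\mathcal A)\subseteq F(\mathcal A')$ for $\mathcal A\subseteq\mathcal A'$ is immediate (take the extra coefficients equal to $-\infty$). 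Contractibility of $F(\mathcal A)$ follows because the map $(t,\lambda)\mapsto$ the corresponding combination gives a path from any point of $F(\mathcal A)$ to $A_1\oplus\dots\oplus A_k$ (the combination with all $\lambda_i=0$), continuously in the Hausdorff metric; in fact $F(\mathcal A)$ is itself "max-plus convex" as a subset of the semimodule $\mpcc(\mathbb R^\omega)$, hence contractible.

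The substantive part is verifying the $l.c.$-space axioms: that open $d$-balls are $F$-sets and that open $r$-neighborhoods of $F$-sets are $F$-sets. For this I would first record the key metric estimate: if $A_i,A_i'\in\mpcc(\mathbb R^\omega)$ and we form $C = \bigoplus\lambda_i\odot A_i$, $C' = \bigoplus\lambda_i\odot A_i'$ with the same coefficients, then $d_H(C,C')\le\max_i d_H(A_i,A_i')$ — this is because the operations $\oplus$ and $\lambda\odot(-)$ are nonexpansive for the chosen metric $d$ on $\mathbb R^\omega$ (the max of $1$-truncated coordinatewise distances is preserved under coordinatewise max and under the common additive shift up to the truncation, which needs a short check), and $d_H$ inherits nonexpansiveness from $d$. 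Granting this, if $A,A'$ both lie in an open ball $O_r(D)$ and $\mathcal A$ is a finite subset of that ball, then every combination $\bigoplus\lambda_i\odot A_i$ is within $\max_i d_H(A_i,D) < r$ of $\bigoplus\lambda_i\odot D$; but $\bigoplus\lambda_i\odot D = D$ whenever... — here one must be slightly careful, since $\bigoplus\lambda_i\odot D$ with $\bigoplus\lambda_i=0$ need not equal $D$. The cleaner route, which I would adopt, is to use that $D$ is itself max-plus convex to get $\bigoplus\lambda_i\odot D \subseteq$ a controlled neighborhood of $D$, or to simply verify the $F$-set property for balls directly: a max-plus convex combination of points each within $r$ of the center is within $r$ of the center, using the nonexpansiveness estimate with all $A_i'$ equal to $D$ and noting $d_H(\bigoplus\lambda_i\odot D, D)\le$ (diameter-type bound that vanishes) — more precisely $\bigoplus\lambda_i\odot D \in\langle D\rangle$-neighborhood since $\max D\in D$. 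The neighborhood-of-$F$-set axiom then follows formally from the ball case by the standard argument (an open $r$-neighborhood of a set $E$ is the union of open $r$-balls, and a max-plus combination of points, each in some $r$-ball around a point of $E$, lies — by nonexpansiveness — in an $r$-ball around the corresponding max-plus combination of those points of $E$, which is in $E$ because $E$ is an $F$-set).

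The main obstacle I anticipate is exactly this last metric bookkeeping: the additive scalar action $\lambda\odot(-)$ interacts awkwardly with the truncated metric $\varrho(x,y)=\min\{|x-y|,1\}$ and the weighting $2^{-i}$, so "nonexpansiveness" of $\odot$ must be stated and proved carefully (it is true: shifting both $x$ and $y$ by the same constant does not increase $\min\{|x_i-y_i|,1\}$), and one must make sure the combination $\bigoplus\lambda_i\odot D$ with $\bigoplus\lambda_i=0$ genuinely stays in the $r$-ball around $D$ — which uses $\bigoplus\lambda_i = 0$ together with max-plus convexity and compactness of $D$ (so that $D = \bigoplus_{i}\lambda_i\odot D$ fails in general, but $d_H(\bigoplus_i\lambda_i\odot D, D)$ is controlled because each $\lambda_i\le 0$ and at least one is $0$, giving $D\subseteq\bigoplus_i\lambda_i\odot D$ when, say... ). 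In the write-up I would isolate this as a preliminary sublemma on nonexpansiveness of the semimodule operations and of the induced Hausdorff metric, after which the verification of Theorem~\ref{t:ho1}'s hypotheses is routine and yields that $\mpcc(\mathbb R^\omega)$ is an AR.
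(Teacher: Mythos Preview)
Your overall plan coincides with the paper's: equip $\mpcc(\mathbb R^\omega)$ with the $c$-structure
\[
F(\{A_1,\dots,A_n\})=\Bigl\{\bigoplus_{i=1}^n\alpha_i\odot A_i\ \Big|\ \alpha_i\in[-\infty,0],\ \bigoplus_i\alpha_i=0\Bigr\},
\]
verify the metric $l.c.$-space axioms, and invoke Theorem~\ref{t:ho1}. Contractibility is handled the same way (the paper uses the explicit homotopy $H(C,t)=C\oplus(\ln t)\odot(\oplus_iA_i)$, which is exactly your path to $A_1\oplus\cdots\oplus A_k$).

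Where you diverge is in the verification of the $F$-set property for balls. The paper argues pointwise: given $d_H(A,B),d_H(A,B')<r$ it fixes $a\in A$, picks nearby $b\in B$, $b'\in B'$, and checks by a two-case coordinate analysis that $b\oplus t\odot b'$ is $r$-close to $a$; then inducts on $n$. Your nonexpansiveness route is in fact cleaner, but you abandon it at a non-obstacle. The identity you doubt,
\[
\bigoplus_{i=1}^n\lambda_i\odot D=D\qquad\text{for }D\in\mpcc(\mathbb R^\omega),\ \bigoplus_i\lambda_i=0,
\]
\emph{does} hold: the inclusion $\subseteq$ is precisely the definition of max-plus convexity of $D$, and for $\supseteq$ take $d_i=d$ for every $i$ (since $\bigoplus_i\lambda_i\odot d=(\bigoplus_i\lambda_i)\odot d=0\odot d=d$). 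Together with the nonexpansiveness you state (which is correct: $\lambda\odot(-)$ is a $d$-isometry and coordinatewise $\max$ satisfies $|\max(x,y)-\max(x',y')|\le\max(|x-x'|,|y-y'|)$, so $d_H(\bigoplus\lambda_i\odot A_i,\bigoplus\lambda_i\odot A_i')\le\max_id_H(A_i,A_i')$), this immediately gives that open balls are $F$-sets, and your sketched argument for $r$-neighborhoods of $F$-sets then goes through verbatim. So the ``main obstacle'' you anticipate is illusory; once you record the identity above, your write-up is complete and arguably tidier than the paper's case analysis.
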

\begin{proof}  Define a $c$-structure on $\mpcc(\mathbb R^\omega)$ as follows:
given any $A_1,\dots, A_n\in \mpcc(\mathbb
R^\omega)$, let 
$$F(\{A_1,\dots, A_n\})=\left\{\bigoplus_{i=1}^n\alpha_i\odot A_i\mid \alpha_1,\dots,\alpha_n\in[-\infty,0],\ \oplus_{i=1}^n\alpha_i=0\right\}.$$

We are going to show that every set of the form $F(\{A_1,\dots, A_n\})$ is contractible. Let $A=\oplus_{i=1}^nA_i$. Then $A\in F(\{A_1,\dots, A_n\})$. Define a map $$H\colon F(\{A_1,\dots, A_n\})\times [0,1]\to F(\{A_1,\dots, A_n\})$$ by the formula: $$H(C,t)=C\oplus(\ln t)\odot A.$$ Note that $H$ is well-defined,  $H(C,0)=C\oplus \{-\infty\}=C$ and $H(C,1)=C\oplus 0\odot A=A$, for every $C\in F(\{A_1,\dots, A_n\})$. Thus, $H$ contracts the set $F(\{A_1,\dots, A_n\})$ to $A$.

Now let us prove that every neighborhood of a point in $\mpcc(\mathbb R^\omega)$ is an $F$-set. Let $A\in \mpcc(\mathbb R^\omega)$, $r>0$, and $d_H(A,B),d_H(A,B')<r$. Given $a\in A$, find $b\in B$ and $b'\in B'$ such that $d(a,b)<r$ and $d(a,b')<r$. Without loss of generality, we may assume that $a=0$. There exist $i,j\in \mathbb N$ such that $$d(a,b)=\frac{\min\{|b_i|,1\}}{2^i},\ d(a,b')=\frac{\min\{|b'_j|,1\}}{2^j}.$$
Given $t\in [-\infty,0]$, find $k\in\mathbb N$ such that $$d(a,b\oplus t\odot b')= \frac{\min\{|\max\{b_k, b'_k+t\}|,1\}}{2^k}.$$

Without loss of generality, we may assume that $r<1$. The rest of the proof splits in two cases.

{\it Case 1.}
$b_k\ge b'_k+t$. Then $$d(a,b\oplus t\odot b')= \frac{|b_k |}{2^k}\le \frac{|b_i |}{2^i}<r.$$

{\it Case 2.}
$b_k\le b'_k+t$. Then also $ b'_k+t\le b'_k$ and
$$d(a,b\oplus t\odot b')= \frac{|b_k +t|}{2^k}\le \max\left\{\frac{|b_k |}{2^k}, \frac{|b_k |}{2^k}\right\}\le \max\left\{\frac{|b_i |}{2^i}, \frac{|b_j |}{2^j}\right\}
<r.$$

In both cases, for every $a\in A$ there is a point $c\in  B\oplus t\odot B'$ such that $d(a,c)<r$. Similarly, for any $c\in  B\oplus t\cdot B'$ one can find $a\in A$ such that $d(a,c)<r$. This shows that $d_H(A, B\oplus t\odot B')<r$ for every $B,B'\in \mpcc(\mathbb R^\omega)$ such that $d_H(A, B)<r$, $d_H(A, B')<r$.

We can demonstrate by induction that
$$d_H(\bigoplus_{i=1}^n\alpha_i\odot A_i,A)<r, \ \hbox{\rm whenever} \ \ \alpha_1,\dots,\alpha_n\in[-\infty,0],$$ 
$$\oplus_{i=1}^n\alpha_i=0, \ \hbox{\rm  and} \ \  d_H(A_i,A)<r, \ \hbox{\rm for every}
\ i=1,\dots,n.$$
This shows that every $r$-neighborhood of a point in the space $\mpcc(\mathbb R^\omega)$ is an $F$-set.

By using a
similar argument we can prove that every neighborhood of an $F$-set is again an $F$-set. It follows from the results of \cite{Ho1}
that the space $\mpcc(\mathbb R^\omega)$ is an AR-space (see Theorem \ref{t:ho1} above). \end{proof}

Let $A,B$ be nonempty sets with $A\subset B$.  
Observe
that the projection $p=p^B_A\colon \mathbb
R^B\to \mathbb R^A$  onto the first factor induces the map $$\mpcc(p)\colon \mpcc(\mathbb
R^B)\to \mpcc(\mathbb R^A)$$ as follows: $$\mpcc(p)(A)=p(A),\ A\in \mpcc(\mathbb
R^B).$$ It is easy to verify that this map is well-defined. 

We may regard the construction $\mpcc$ as a covariant functor acting on the category whose objects are the powers of $\mathbb R$ and the morphisms are the projections. 

 \begin{lemma} Let $p\colon \mathbb
R^\omega\times\mathbb
R^\omega\to \mathbb R^\omega$ be the projection onto the first factor. Then  the  map $\mpcc(p)$ is soft.
\end{lemma}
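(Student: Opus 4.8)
The plan is to verify the definition of softness directly: given a commutative square with $A$ a closed subset of a paracompact space $Z$, a map $\varphi\colon A\to\mpcc(\mathbb R^\omega\times\mathbb R^\omega)$ and a map $\psi\colon Z\to\mpcc(\mathbb R^\omega)$ with $\mpcc(p)\circ\varphi=\psi|A$, I must produce an extension $\Phi\colon Z\to\mpcc(\mathbb R^\omega\times\mathbb R^\omega)$ with $\mpcc(p)\circ\Phi=\psi$ and $\Phi|A=\varphi$. The key observation is that for $C\in\mpcc(\mathbb R^\omega\times\mathbb R^\omega)$, knowing $p(C)=D$ means $C$ is a max-plus convex compact subset of $D\times\mathbb R^\omega$ projecting onto $D$; so the fibre of $\mpcc(p)$ over $D$ is the set of such $C$, which is nonempty (it contains $D\times\{0\}$, for instance) and, I claim, admits a natural $l.c.$-structure making it a complete metric $l.c.$-space. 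This puts us in position to apply the selection theorem, Theorem~\ref{t:ho}.

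The main steps are as follows. First I would introduce the multivalued map $T\colon Z\to\mpcc(\mathbb R^\omega\times\mathbb R^\omega)$ whose value at $z\in Z\setminus A$ is the entire fibre $\mpcc(p)^{-1}(\psi(z))$ and whose value at $z\in A$ is the single point $\varphi(z)$; a continuous selection of $T$ is exactly the desired $\Phi$. Second, I would check that $T$ is lower semicontinuous: on $Z\setminus A$ this reduces to lower semicontinuity of $D\mapsto\mpcc(p)^{-1}(D)$, which follows from the fact that $\mpcc(p)$ is an open surjection (preimages of Vietoris-open sets behave well because $p$ itself is an open projection and max-plus convex hulls depend continuously on finite data); near points of $A$ one uses continuity of $\varphi$ together with the compatibility $\mpcc(p)\circ\varphi=\psi|A$ and the fact that $A$ is closed. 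Third — and this is where the $c$-structure of Lemma~\ref{l:1} is reused — I would equip $\mpcc(\mathbb R^\omega\times\mathbb R^\omega)$ with the same max-plus $c$-structure $F(\{C_1,\dots,C_n\})=\{\bigoplus_{i=1}^n\alpha_i\odot C_i\mid\alpha_i\in[-\infty,0],\ \oplus\alpha_i=0\}$, verify as in Lemma~\ref{l:1} that this makes it a complete metric $l.c.$-space, and then observe that each value of $T$ is a closed $F$-set: the fibre $\mpcc(p)^{-1}(D)$ is closed under the operations $C\mapsto\bigoplus\alpha_i\odot C_i$ because $p(\bigoplus\alpha_i\odot C_i)=\bigoplus\alpha_i\odot p(C_i)=\bigoplus\alpha_i\odot D=D$ (using $\oplus\alpha_i=0$), and the singleton $\{\varphi(z)\}$ is trivially an $F$-set; closedness of the fibres follows from continuity of $\mpcc(p)$ and completeness of the ambient metric. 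Finally, Theorem~\ref{t:ho} yields a continuous selection $\Phi$ of $T$, and by construction $\Phi$ extends $\varphi$ and satisfies $\mpcc(p)\circ\Phi=\psi$, so $\mpcc(p)$ is soft.

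The step I expect to be the main obstacle is the lower semicontinuity of $T$, specifically the behaviour near the closed set $A$: one must glue the rigid requirement $\Phi|A=\varphi$ with the flexible fibrewise requirement on $Z\setminus A$ in such a way that the resulting multivalued map has no ``collapse'' of its values as $z$ approaches $A$. This requires showing that for $z_0\in A$ and a neighbourhood of $\varphi(z_0)$ in the hyperspace, the fibres $\mpcc(p)^{-1}(\psi(z))$ for $z$ near $z_0$ all meet that neighbourhood — which in turn rests on the openness of the projection $\mpcc(p)$ and a uniform continuity estimate for how $\mpcc(p)^{-1}(D)$ varies with $D$ in the Hausdorff metric, analogous to the two-case neighbourhood estimate carried out in the proof of Lemma~\ref{l:1}. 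The verification that $\mpcc(\mathbb R^\omega\times\mathbb R^\omega)$ is a metric $l.c.$-space is then a routine repetition of that lemma's argument, since $\mathbb R^\omega\times\mathbb R^\omega$ is again a countable power of $\mathbb R$ with an analogous metric.
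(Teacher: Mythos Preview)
Your proposal is correct and follows essentially the same approach as the paper's own proof: define the multivalued map that assigns the full fibre $\mpcc(p)^{-1}(\psi(z))$ on $Z\setminus A$ and the singleton $\{\varphi(z)\}$ on $A$, verify that $\mpcc(\mathbb R^\omega\times\mathbb R^\omega)$ with the max-plus $c$-structure is a complete metric $l.c.$-space by repeating the argument of Lemma~\ref{l:1}, observe that the values are closed $F$-sets (your computation $p(\bigoplus\alpha_i\odot C_i)=\bigoplus\alpha_i\odot D=D$ is exactly the point), and apply Theorem~\ref{t:ho}. Your discussion of lower semicontinuity near $A$ is in fact more detailed than the paper's, which dispatches that step in a single sentence.
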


\begin{proof} Consider a commutative diagram
\begin{equation}\label{d:9} \xymatrix{A\ar@{^{(}->}[d]\ar[r]^{\varphi\ \ \ \ }&{\mpcc(\mathbb
R^\omega\times\mathbb
R^\omega)}\ar[d]^{\mpcc(p)}\\
Z\ar[r]_{\psi\ \ \ }&  \mpcc(\mathbb R^\omega),}\end{equation}
where $A$ is a closed subset of a paracompact space $Z$.

For every $C\in \mpcc(\mathbb R^\omega)$, the preimage $$\mpcc(p)^{-1}(c)\subset \mpcc(\mathbb
R^\omega\times\mathbb
R^\omega)$$ is convex with respect to the $c$-structure $F$ in the space $\mpcc(\mathbb
R^\omega\times\mathbb
R^\omega)$ defined as follows:
given any $A_1,\dots, A_n\in \mpcc(\mathbb
R^\omega\times\mathbb
R^\omega)$, let
$$F(\{A_1,\dots, A_n\})=\left\{\bigoplus_{i=1}^n\alpha_i\odot A_i\mid \alpha_1,\dots,\alpha_n\in[-\infty,0],\ \oplus_{i=1}^n\alpha_i=0\right\}.$$ Note that this is an $F$-structure with respect to the Hausdorff metric $d'_H$ on the space $\mpcc(\mathbb R^\omega\times \mathbb R^\omega)$ generated by the metric $d'$ on the space $\mathbb R^\omega\times \mathbb R^\omega$ defined by the formula: $$d'((x_1,y_1),(x_2,y_2))=\max\{d(x_1,x_2),d(y_1,y_2)\}.$$
This can be established by repeating the corresponding arguments from the proof of Lemma \ref{l:1}.

Define a multivalued map $\Phi\colon Z\to \mpcc(\mathbb
R^\omega\times\mathbb
R^\omega)$ as follows: $$\Phi(z)=\begin{cases} \mpcc(p)^{-1}(\psi(z)), & \text{ if }z\in Z\setminus A,\\
\{\varphi(z)\}, \text{ if }z\in A.
\end{cases}$$
Clearly, the images of $\Phi$ are $F$-sets. Since the set $A$ is closed, we see that the map $\Phi$ is lower semicontinuous. It follows from Selection Theorem \ref{t:ho} that this map admits a continuous selection, $g$. Clearly, $g|A=\varphi$ and $g\mpcc(p)=\psi$. This proves the softness of $\mpcc(p)$.
\end{proof}

\section{The main result}

\begin{thm}\label{mpt19}
The hyperspace $\mpcc(\mathbb
R^\omega)$ of compact max-plus  convex subsets in the space $\mathbb
R^\omega$  is homeomorphic to $\mathbb R^\omega$.
\end{thm}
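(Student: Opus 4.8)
The plan is to apply the Toru\'nczyk-West characterization theorem (Theorem \ref{t:bundle}) to an appropriate map with domain $\mpcc(\mathbb R^\omega)$, together with Toru\'nczyk's characterization of $\ell^2$-manifolds, and then identify the total space. Concretely, I would first observe that $\mpcc(\mathbb R^\omega)$ is a complete metric space: completeness of $(\mathbb R^\omega,d)$ passes to the Hausdorff metric on $\exp(\mathbb R^\omega)$, and $\mpcc(\mathbb R^\omega)$ is a closed subset of $\exp(\mathbb R^\omega)$ since a Hausdorff limit of max-plus convex compacta is again max-plus convex (the operations $\oplus$ and $\odot$ are continuous, so the defining condition $\alpha\odot a\oplus\beta\odot b\in A$ is preserved under Hausdorff convergence). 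By Lemma \ref{l:1} it is an AR, hence a complete separable metric AR.

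Next I would show $\mpcc(\mathbb R^\omega)$ is nowhere locally compact and satisfies the SDAP, so that by Toru\'nczyk's theorem it is an $\ell^2$-manifold; being an AR, it is then homeomorphic to $\ell^2$, and by Anderson-Kadec to $\mathbb R^\omega$. The cleanest route to SDAP and non-local-compactness, however, is to exploit the functorial and bundle-theoretic structure already set up: the projection $p\colon \mathbb R^\omega\times\mathbb R^\omega\to\mathbb R^\omega$ induces $\mpcc(p)\colon \mpcc(\mathbb R^\omega\times\mathbb R^\omega)\to\mpcc(\mathbb R^\omega)$, which is soft by the second lemma. Since $\mathbb R^\omega\times\mathbb R^\omega\cong\mathbb R^\omega$, the total space here is again $\mpcc(\mathbb R^\omega)$. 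I would verify the fiberwise discrete approximation property for $\mpcc(p)$: given a map $g\colon\coprod_n I^n\to\mpcc(\mathbb R^\omega\times\mathbb R^\omega)$, one perturbs within each fiber $\mpcc(p)^{-1}(C)$ by translating the ``second-coordinate'' part by large generic constants (using the $\odot$-action on the second $\mathbb R^\omega$ factor, which moves sets arbitrarily far in the complete metric $d'$ while staying in the same fiber), arranging the images of distinct cubes to be pushed out to pairwise-far, locally finite positions; softness-type genericity makes the resulting family discrete. By Theorem \ref{t:bundle}, $\mpcc(p)$ is then a trivial $\mathbb R^\omega$-bundle, i.e. $\mpcc(\mathbb R^\omega\times\mathbb R^\omega)\cong\mpcc(\mathbb R^\omega)\times\mathbb R^\omega$. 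Combining with $\mpcc(\mathbb R^\omega\times\mathbb R^\omega)\cong\mpcc(\mathbb R^\omega)$ gives $\mpcc(\mathbb R^\omega)\cong\mpcc(\mathbb R^\omega)\times\mathbb R^\omega$; feeding this back, $\mpcc(\mathbb R^\omega)$ is an AR that absorbs a factor $\mathbb R^\omega$, which forces it (via Toru\'nczyk's theorem, the product with $\mathbb R^\omega$ being nowhere locally compact and satisfying SDAP) to be homeomorphic to $\ell^2\cong\mathbb R^\omega$.

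The main obstacle I anticipate is verifying the fiberwise discrete approximation property for $\mpcc(p)$ with genuine care: one must produce a fiber-preserving perturbation whose images form a \emph{discrete} (not merely locally finite near the image) family, and the natural ``translate the second factor far away'' trick must be executed so that the perturbation size is controlled by the prescribed $\eps(x)$ while still achieving escape to infinity — these pull in opposite directions and must be reconciled by working one cube at a time with a partition-of-unity/inductive argument, exactly as in the standard proofs of FDAP for functor-induced bundles. A secondary technical point is checking that $\mpcc(\mathbb R^\omega)$ is nowhere locally compact; this is immediate once the $\mathbb R^\omega$-factor is split off, but should be noted. Everything else — completeness, the AR property, softness, the homeomorphism $\mathbb R^\omega\times\mathbb R^\omega\cong\mathbb R^\omega$ and its compatibility with $\mpcc$ — is either already established in the excerpt or routine.
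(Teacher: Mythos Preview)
Your overall strategy coincides with the paper's: both apply the Toru\'nczyk--West characterization (Theorem~\ref{t:bundle}) to a projection of the form $\mpcc(\mathbb R^\omega\times Y)\to\mpcc(\mathbb R^\omega)$ (you take $Y=\mathbb R^\omega$; the paper takes $Y=(\mathbb R^\omega)^\omega$, organised as an inverse sequence), use softness from the second lemma together with FDAP to conclude that this map is a trivial $\ell^2$-bundle, and then finish via $\mpcc(\mathbb R^\omega)\cong\mpcc(\mathbb R^\omega)\times\ell^2\cong\ell^2$ for the complete separable AR $\mpcc(\mathbb R^\omega)$.

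The one place your sketch needs correction is the proposed FDAP mechanism. Translating the second factor by the scalar $\odot$-action does \emph{not} ``move sets arbitrarily far in the complete metric $d'$'': the metric $d$ on $\mathbb R^\omega$ is bounded by $1/2$, and $\lambda\odot(\,\cdot\,)$ shifts \emph{every} coordinate of the second factor, including the first one of weight $1/2$, so the perturbation has size $\min\{|\lambda|,1\}/2$ regardless of which coordinate you look at. Thus for small $\varepsilon$ you are forced to take $|\lambda|$ small, and then the translated images of distinct cubes need not be separated at all (their first-factor projections were arbitrary to begin with). The paper resolves exactly the tension you anticipate, but by a different device: fix a closed discrete set $\{c_i\}\subset\mathbb R^\omega$ and, for $y\in Q_i$ with $\varepsilon(f(y))\ge 2^{-n}$, replace the coordinates beyond the $n$th $\mathbb R^\omega$-block of $f(y)$ by the constant $c_i$ (with a partition-of-unity interpolation on the transition strata). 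This is an $\varepsilon$-small change because only coordinates of metric weight $\le 2^{-n}$ are altered, yet it forces $\mpcc(\pi_{n+1})(g(y))=\{c_i\}$, which yields discreteness of the family $\{g(Q_i)\}$ via discreteness of $\{c_i\}$. This replacement-by-a-singleton trick, rather than a global $\odot$-shift, is the actual content of the FDAP step; everything else in your outline is correct and matches the paper.
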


\begin{proof} Since $\mathbb
R^\omega$ is homeomorphic to $(\mathbb
R^\omega)^\omega$, one can represent the latter space as the limit of the inverse sequence $$\mathbb
R^\omega \leftarrow \mathbb
R^\omega \times \mathbb
R^\omega \leftarrow \mathbb
R^\omega \times \mathbb
R^\omega \times \mathbb
R^\omega \leftarrow\dots,$$ where every arrow denotes the projection onto the first factor. Applying the functor $\mpcc$ to this sequence we obtain \begin{equation}\label{e:1}
\mpcc(\mathbb
R^\omega) \leftarrow \mpcc(\mathbb
R^\omega \times \mathbb
R^\omega) \leftarrow \mpcc(\mathbb
R^\omega \times \mathbb
R^\omega \times \mathbb
R^\omega) \leftarrow\dots\end{equation} The bonding maps of the latter sequence have the following property: for every such a map there exists a countable family of selections such that the family of images of these selections is discrete. Indeed, let $C=\{c_i\mid i\in\omega\}$ be a closed countable subset of $\mathbb R^\omega$. For every $i\in\omega$, denote by $s_i$ the selection of the map $$\mpcc((\mathbb
R^\omega \times \mathbb
R^\omega \times\dots\times  \mathbb
R^\omega)\times\mathbb
R^\omega)\to\mpcc(\mathbb
R^\omega \times \mathbb
R^\omega \times\dots\times  \mathbb
R^\omega)$$
defined as follows: $s_i(A)=A\times\{c_i\}$.

We are going to show that the the limit projection of the inverse limit of (\ref{e:1}) onto $\mpcc(\mathbb R^\omega)$ satisfies the FDAP. Let $f\colon \sqcup_{i\in\mathbb N}Q_i\to \mpcc((\mathbb R^\omega)^\omega)$ be a map and let $\varepsilon \colon \mpcc((\mathbb R^\omega)^\omega)\to(0,\infty)$ be a function. For every $n\in\omega$, let $$Y_n=\left\{y\in Y\mid \varepsilon (f(y))\ge \frac{1}{2^n}\right\}.$$ Note that $$Y_0\subset \mathrm{Int}(Y_1)\subset Y_1\subset \mathrm{Int}(Y_2)\subset Y_2\dots.$$

Define, for every $l=0,2,4,\dots$, a map $g_l\colon Y_{l-1}\cup Y_l\cup Y_{l+1}\to \mpcc((\mathbb R^\omega)^\omega)$ by the formula: $$g_l(y)=\mpcc(\pr_{l+1})(f(y))\times\{c_i\}\times\{c_i\}\times\dots,$$
whenever $y\in Q_i$.
Now, for every $l=1,3,5,\dots$, let $\varphi_l\colon Y_{l-1}\cup Y_l\cup Y_{l+1}\to [0,1]$ be a function such that $\varphi_l|Y_{l-1}\equiv 0$, $\varphi_l|Y_{l+1}\equiv 0$.

Define a map $g\colon \sqcup_{i\in\mathbb N}Q_i\to \mpcc((\mathbb R^\omega)^\omega)$ by the following condition. Let $y\in Y_l$, where $l=0,2,4,\dots$. Then define $g(y)=g_l(y)$. If $y\in Y_l\cap Q_i$, where $l=1,3,5,\dots$, then define
$$g(y)=\{(a_1,\dots,a_l, \varphi_l(y)a_{l+1}+(1-\varphi_l(y))c_i,c_i,c_i,\dots)\mid (a_k)_{k=1}^\infty\in f(y)\}.$$
It is easy to see that the map $g$ is well-defined, $\mpcc(\pr_1)f=\mpcc(\pr_1)g$, and that $d(f(x),g(x))<\varepsilon(x)$, for every $x\in \sqcup_{i\in\mathbb N}Q_i$.

We are going to prove that the map $g$ is a closed embedding. Suppose the contrary. Then there exists a sequence $(y_{k_i})_{i=1}^\infty$, where $y_{k_i}\in Q_{k_i}$ for every $i$ (here we assume that $k_1<k_2<k_3<\dots$), such that $\lim_{i\to\infty}g(y_{k_i})=A$, for some  $A\in\mpcc((\mathbb R^\omega)^\omega)$. Without loss of generality, one may assume that $k_i=i$, for all $i$.

Since $\varepsilon(A)>0$, one may assume that $\varepsilon(g(y_i))>2^{-n}$, for some $n<\omega$.
Denote by $\pi_k\colon (\mathbb R^\omega)^\omega\to \mathbb R^\omega$ the projection onto the $k$th factor. Then  from the construction of the map $g$ it follows that $\mpcc(\pi_{n+1})(g(y_i))=\{c_i\}$. Since the set $C=\{c_i\mid i\in\omega\}$ is closed in $\mathbb R^\omega$, we obtain a contradiction.

It now follows from Theorem \ref{t:bundle} that the limit projection of the inverse limit of the inverse sequence (\ref{e:1}) onto $\mpcc(\mathbb R^\omega)$ is a trivial $\ell^2$-bundle.
Since the space $\mpcc(\mathbb R^\omega)$ is an absolute retract, we conclude that $$\mpcc(\mathbb R^\omega)\simeq \mpcc((\mathbb R^\omega)^\omega)\simeq\mpcc(\mathbb R^\omega) \times \ell^2\simeq  \ell^2,$$ which proves the theorem.
\end{proof}

\begin{rem}\label{r:1} As a by-product of the proof we see that the map $\mpcc(p_1)\colon \mpcc(\mathbb R^\omega\times\mathbb R^\omega)\to \mpcc(\mathbb R^\omega)$ is a trivial $\ell^2$-bundle (here $p_1$ denotes the projection onto the first factor).
\end{rem}

The following result is an analog of a theorem of the first-named author \cite{Ba1},
proved for the open sets in $\mathbb R^n$, $n\ge2$.

\begin{thm}\label{mpt10} Let $X$ be an open subset in the space $\mathbb R^\omega$. Then the hyperspace of max-plus  convex subsets in $X$  is homeomorphic to $X$.
\end{thm}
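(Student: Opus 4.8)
The plan is to realise $\mpcc(X)$ as an open subspace of the $\ell^2$-manifold $\mpcc(\mathbb R^\omega)$, to exhibit a homotopy equivalence between $\mpcc(X)$ and $X$, and then to invoke the topological classification of $\ell^2$-manifolds. Here $\mpcc(X)$ denotes the hyperspace of nonempty compact max-plus convex subsets of $X$; the whole argument is the $\ell^2$-analogue of the $Q$-manifold reasoning behind the theorem of \cite{Ba1}.

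First I would check that $\mpcc(X)=\{A\in\mpcc(\mathbb R^\omega)\mid A\subset X\}$ is open in $\mpcc(\mathbb R^\omega)$: if $A$ is a compact subset of the open set $X$, then $O_\varepsilon(A)\subset X$ for some $\varepsilon>0$, so the Hausdorff $\varepsilon$-ball around $A$ lies in $\mpcc(X)$. By Theorem \ref{mpt19} the space $\mpcc(\mathbb R^\omega)$ is homeomorphic to $\mathbb R^\omega\cong\ell^2$, hence $\mpcc(X)$ is an open subset of $\ell^2$ and therefore an $\ell^2$-manifold; likewise $X$, being open in $\mathbb R^\omega\cong\ell^2$, is an $\ell^2$-manifold.

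Next I would produce the homotopy equivalence. Let $i\colon X\to\mpcc(X)$ be the singleton embedding $i(x)=\{x\}$, and let $m\colon\mpcc(X)\to X$ be given by $m(A)=\max A$; the latter is well defined since $\max A\in A\subset X$ for any max-plus convex compactum, and it is continuous because, coordinatewise, the maximum of the coordinate function over a Hausdorff-convergent sequence of compacta converges. Plainly $m\circ i=\mathrm{id}_X$. For the other composite I would use the contraction idea from Lemma \ref{l:1} and set
\[
H\colon\mpcc(X)\times[0,1]\to\mpcc(X),\qquad H(A,t)=A\oplus(\ln t)\odot\max A,
\]
with the conventions $\ln 0=-\infty$, $x\oplus(-\infty)=x$. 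For $t\in(0,1]$ each point $a\oplus(\ln t)\odot\max A$ equals $0\odot a\oplus(\ln t)\odot\max A$, a max-plus combination of $a,\max A\in A$ with coefficients $0$ and $\ln t\le0$ satisfying $0\oplus\ln t=0$; hence $H(A,t)\subset A\subset X$, and $H(A,t)$ is a compact max-plus convex set, so $H(A,t)\in\mpcc(X)$. Since $H(A,0)=A$ and $H(A,1)=\{\max A\}=i(m(A))$, the map $H$ is a homotopy $\mathrm{id}_{\mpcc(X)}\simeq i\circ m$, so $i$ is a homotopy equivalence and $\mpcc(X)\simeq X$.

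Finally, $\mpcc(X)$ and $X$ are homotopy equivalent $\ell^2$-manifolds, hence homeomorphic by the topological classification of $\ell^2$-manifolds (homotopy equivalent $\ell^2$-manifolds are homeomorphic). The step that I expect to need genuine care is the continuity of $H$ at $t=0$: one has to show that $d_H\bigl(A_n\oplus(\ln t_n)\odot\max A_n,\,A_n\bigr)\to0$ whenever $A_n\to A_0$ in $\mpcc(X)$ and $t_n\to 0^+$, which holds because $d$ is a weighted supremum of bounded coordinate metrics, the coordinates of $\max A_n$ stay bounded, and $\ln t_n\to-\infty$; the remaining points (continuity of $m$ and of $H$ for $t>0$, and the verification that $H$ takes values in $\mpcc(X)$) are routine and parallel the estimates in the proof of Lemma \ref{l:1}.
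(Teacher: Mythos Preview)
Your argument is correct and is essentially the paper's own proof: both show that $\mpcc(X)$ is open in $\mpcc(\mathbb R^\omega)\cong\ell^2$, use the retraction $A\mapsto\max A$ together with the homotopy $H(A,t)=\{a\oplus(\ln t)\odot\max A\mid a\in A\}$ to get a homotopy equivalence with $X$, and then apply the classification of $\ell^2$-manifolds. You supply more detail than the paper (the openness of $\mpcc(X)$, the continuity of $H$ at $t=0$), but the route is the same.
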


\begin{proof} The set $X$ is a  $\mathbb R^\omega$-manifold being an open subset of $\mpcc(\mathbb R^\omega)$. We identify the set  $X$ with the set of all singletons in
$X$. The map  $\max\colon\mpcc(X)\to X$ is therefore a retraction. Denote the homotopy $H\colon \mpcc(X)\times [0,1]\to
\mpcc(X)$ by the formula $$H(A,t)=\{a\oplus \ln t\max A\mid a\in A\},\
A\in\mpcc(X),\ t\in[0,1]$$ (convention: $\ln0=-\infty$).

Therefore, the space $X$ is a deformation retract of the space $\mpcc(X)$,
whence we conclude that the spaces $X$ and $\mpcc(X)$ are homotopically equivalent. The classification theorem for  $\mathbb
R^\omega$-manifolds implies that the spaces  $X$ and $\mpcc(X)$
are homeomorphic.
\end{proof}

\begin{thm} The hyperspace $\mpcc(\mathbb R^{\omega_1})$ is homeomorphic to $\mathbb R^{\omega_1}$.
\end{thm}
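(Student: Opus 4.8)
The plan is to realize $\mathbb R^{\omega_1}$ as the limit of a continuous well-ordered inverse spectrum of copies of $\mathbb R^\omega$, apply the functor $\mpcc$ to this spectrum, and use an uncountable analogue of the Toru\'nczyk–West machinery together with the Shchepin spectral theorem for $\mathbb R^{\omega_1}$-manifolds. Concretely, write $\omega_1=\bigcup_{\alpha<\omega_1}A_\alpha$ as an increasing continuous union of countable sets $A_\alpha$ with $A_0$ countably infinite and $A_{\alpha+1}\setminus A_\alpha$ countably infinite; then $\mathbb R^{\omega_1}=\varprojlim\{\mathbb R^{A_\alpha},p^{A_\beta}_{A_\alpha}\}$, where each $\mathbb R^{A_\alpha}\cong\mathbb R^\omega$, each successor bonding map $p^{A_{\alpha+1}}_{A_\alpha}\colon\mathbb R^{A_{\alpha+1}}\to\mathbb R^{A_\alpha}$ is (up to homeomorphism) the projection $\mathbb R^\omega\times\mathbb R^\omega\to\mathbb R^\omega$, and at limit ordinals the spectrum is continuous. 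Applying $\mpcc$ and using the functoriality established before Lemma~2, we obtain a continuous inverse spectrum
$$\mathcal S=\{\mpcc(\mathbb R^{A_\alpha}),\ \mpcc(p^{A_\beta}_{A_\alpha}),\ \alpha<\beta<\omega_1\}$$
whose limit is $\mpcc(\mathbb R^{\omega_1})$ (the limit commutes with $\mpcc$ here because a compact max-plus convex set in $\mathbb R^{\omega_1}$ is determined by, and glued from, its projections to the $\mathbb R^{A_\alpha}$, the coherence being exactly the inverse-limit condition; this requires a short verification that the natural map $\mpcc(\varprojlim)\to\varprojlim\mpcc$ is a homeomorphism, which follows since $\oplus$ and $\odot$ act coordinatewise and compactness passes through).

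Next I would show that $\mathcal S$ is an \emph{$\mathbb R^\omega$-manifold spectrum with soft bonding maps}: by Lemma~2 (and its evident extension, since $\mathbb R^{A_{\alpha+1}}\cong\mathbb R^{A_\alpha}\times\mathbb R^\omega$ with $p^{A_{\alpha+1}}_{A_\alpha}$ the first projection) every successor bonding map $\mpcc(p^{A_{\alpha+1}}_{A_\alpha})$ is soft, and by Remark~\ref{r:1} it is in fact a trivial $\ell^2$-bundle, hence a trivial $\mathbb R^\omega$-bundle; softness of the limit-stage maps $\mpcc(p^{A_\lambda}_{A_\alpha})$ for limit $\lambda$ follows from continuity of the spectrum together with softness of the cofinal successor maps. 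Combined with Lemma~1 (each $\mpcc(\mathbb R^{A_\alpha})$ is an AR, indeed homeomorphic to $\mathbb R^\omega$ by Theorem~\ref{mpt19}), this exhibits $\mpcc(\mathbb R^{\omega_1})$ as the limit of a continuous $\omega_1$-spectrum of $\mathbb R^\omega$-manifolds with trivial-$\mathbb R^\omega$-bundle successor projections. The Shchepin–Toru\'nczyk-type characterization of $\mathbb R^{\omega_1}$ (an absolute retract of weight $\omega_1$ that is the limit of such a factorizing $\omega_1$-spectrum of $\mathbb R^\omega$-manifolds is homeomorphic to $\mathbb R^{\omega_1}$) then yields the result; alternatively one invokes that $\mathbb R^{\omega_1}$ is characterized as the unique AR of weight $\omega_1$ all of whose ``countable-type'' retractions factor appropriately, and checks the FDAP fiberwise exactly as in the proof of Theorem~\ref{mpt19} using the closed countable set $C=\{c_i\}$ and the selections $s_i(A)=A\times\{c_i\}$ at each successor level.

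The main obstacle I anticipate is the passage to limit ordinals: one must verify that the spectrum $\mathcal S$ is genuinely continuous (i.e.\ $\mpcc(\mathbb R^{A_\lambda})=\varprojlim_{\alpha<\lambda}\mpcc(\mathbb R^{A_\alpha})$ for limit $\lambda$), which amounts to showing that a compact max-plus convex subset of $\mathbb R^{A_\lambda}$ is the inverse limit of its projections — this uses that $A_\lambda=\bigcup_{\alpha<\lambda}A_\alpha$ and that max-plus convexity, being a coordinatewise condition on triples, is preserved and reflected by the projections, plus a compactness argument — and, more delicately, that softness is inherited at limit stages. The standard device is Shchepin's theorem that a limit of a continuous spectrum of soft maps over a directed set without a maximal element of cofinality $\le\omega$ is soft provided the spectrum is $\omega$-continuous; here every limit ordinal $\lambda<\omega_1$ has countable cofinality, so one instead uses the factorization property: $\mpcc(p^{A_\lambda}_{A_\alpha})$ factors through the cofinal tower $\{\mpcc(p^{A_{\alpha_n}}_{A_\alpha})\}_{n<\omega}$ of trivial $\mathbb R^\omega$-bundles, and a countable composition / limit of trivial $\mathbb R^\omega$-bundles over ARs is again soft with the FDAP — precisely the content of Theorem~\ref{t:bundle} applied levelwise. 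Assembling these, together with the weight-$\omega_1$ bookkeeping guaranteeing the spectrum is factorizing in Shchepin's sense, completes the argument; the uniqueness of $\mathbb R^{\omega_1}$ among such limits then gives $\mpcc(\mathbb R^{\omega_1})\cong\mathbb R^{\omega_1}$.
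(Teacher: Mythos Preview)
Your proposal is correct and follows essentially the same route as the paper: represent $\mathbb R^{\omega_1}$ as the limit of a continuous $\omega_1$-spectrum of copies of $\mathbb R^\omega$ with projection bonding maps, apply the functor $\mpcc$, invoke Remark~\ref{r:1} to see that the resulting bonding maps are trivial $\ell^2$-bundles, and conclude via the spectral characterization of $\mathbb R^{\omega_1}$. The paper's version is terser---it packages the limit-ordinal continuity issue you worry about into a one-line appeal to ``continuity of the functor $\mpcc$'' (citing \cite{Ba}) and asserts directly that every bonding map, not only the successor ones, is homeomorphic to the standard projection $\mathbb R^\omega\times\mathbb R^\omega\to\mathbb R^\omega$---but the substance is the same.
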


\begin{proof} We represent $\mathbb R^{\omega_1}$ as the limit of the  inverse system $\mathcal S=\{(\mathbb R^{\omega})^\alpha, p_{\alpha\beta}; \omega_1\}$, where, for $\alpha>\beta$, the map $p_{\alpha\beta}\colon (\mathbb R^{\omega})^\alpha\to (\mathbb R^{\omega})^\beta$ is the projection map. Then, recall that every projection map $p_{\alpha\beta}$ induces the map $\mpcc(p_{\alpha\beta})\colon \mpcc((\mathbb R^{\omega})^\alpha)\to \mpcc((\mathbb R^{\omega})^\beta)$ and therefore we obtain the inverse system $$\mpcc(\mathcal S)=\{\mpcc((\mathbb R^{\omega})^\alpha), \mpcc(p_{\alpha\beta}); \omega_1\}.$$

Since by Remark \ref{r:1}, every bonding map $\mpcc(p_{\alpha\beta})$ is homeomorphic to the projection $p\colon \mathbb
R^\omega\times\mathbb
R^\omega\to \mathbb R^\omega$, we conclude that  $$\mpcc(\mathbb R^{\omega_1})=\mpcc(\varprojlim(\mathcal S))= \varprojlim(\mpcc(\mathcal S))\simeq \mathbb R^{\omega_1}$$
(the second equality is simply the continuity of the functor $\mpcc$; see \cite{Ba} for details.)
\end{proof}
In the sequel, we shall
speak of the theory of noncompact nonmetrizable absolute extensors in the sense of \cite{Chi}. They are defined as retracts of functionally open subspaces of powers of the real line. Recall that a set $U$ in a topological space $X$ is called {\em functionally open} if $U=f^{-1}((0,1])$, for some continuous function $f\colon X\to[0,1]$. 

\begin{thm} Let  $M$ be a functionally  open subset of $\mathbb R^{\omega_1}$. Then $\mpcc(M)$ is homeomorphic to $M$.
\end{thm}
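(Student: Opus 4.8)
The plan is to mimic the passage from Theorem~\ref{mpt19} to Theorem~\ref{mpt10}, working now over $\mathbb R^{\omega_1}$ instead of $\mathbb R^\omega$. First I would observe that, by the preceding theorem, $\mpcc(\mathbb R^{\omega_1})$ is homeomorphic to $\mathbb R^{\omega_1}$; hence a functionally open subset $M\subset\mathbb R^{\omega_1}$ may be regarded as a functionally open subset of $\mpcc(\mathbb R^{\omega_1})$, and in particular $M$ is an $\mathbb R^{\omega_1}$-manifold (a functionally open subspace of a power of the real line, hence an absolute extensor in the nonmetrizable sense of \cite{Chi}, with local models $\mathbb R^{\omega_1}$). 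As in the proof of Theorem~\ref{mpt10}, identify $M$ with the set of singletons inside $\mpcc(M)$; then $\max\colon\mpcc(M)\to M$ is a retraction, and the formula $H(A,t)=\{a\oplus(\ln t)\odot\max A\mid a\in A\}$ (convention $\ln 0=-\infty$) defines a deformation of $\mpcc(M)$ onto $M$. One must check that $H$ takes values in $\mpcc(M)$: since $M$ is open, for each $A\in\mpcc(M)$ compactness gives a uniform $\varepsilon$ with the $\varepsilon$-neighbourhood of $A$ contained in $M$, and $H(A,t)$ stays within that neighbourhood of $A$ for all $t$ — this is the same two-case estimate used in Lemma~\ref{l:1}, now applied coordinatewise in $\mathbb R^{\omega_1}$, and it goes through because only finitely many coordinates matter for any basic neighbourhood. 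Thus $M$ is a deformation retract of $\mpcc(M)$, so the two spaces are homotopy equivalent.

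Next I would show $\mpcc(M)$ is itself an $\mathbb R^{\omega_1}$-manifold. For this I would represent $M$ as the limit of an inverse system indexed by $\omega_1$ with metrizable ($\mathbb R^\omega$-manifold) short projections: writing $\mathbb R^{\omega_1}=\varprojlim\{(\mathbb R^\omega)^\alpha,p_{\alpha\beta};\omega_1\}$ as in the previous theorem, the functionally open set $M$ depends on only countably many coordinates, i.e.\ $M=q_{\alpha_0}^{-1}(M_0)$ for some countable $\alpha_0<\omega_1$ and some open $M_0\subset(\mathbb R^\omega)^{\alpha_0}\cong\mathbb R^\omega$, where $q_{\alpha_0}$ is the limit projection. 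Then $M=\varprojlim\{M_\alpha,p_{\alpha\beta}|;\ \alpha_0\le\alpha<\omega_1\}$ with $M_\alpha=p_{\alpha,\alpha_0}^{-1}(M_0)$, and applying the functor $\mpcc$ and invoking its continuity (\cite{Ba}) gives $\mpcc(M)=\varprojlim\{\mpcc(M_\alpha),\mpcc(p_{\alpha\beta}|);\omega_1\}$. Each bonding map $\mpcc(M_{\alpha+1})\to\mpcc(M_\alpha)$ is, by the argument of Remark~\ref{r:1} applied over the open base $M_\alpha$ rather than over all of $\mathbb R^\omega$ (the softness and FDAP arguments of the two lemmas are local and localize to the $c$-structure on $\mpcc(p)^{-1}$-fibers), a trivial $\mathbb R^\omega$-bundle; and $\mpcc(M_{\alpha_0})=\mpcc(M_0)\cong M_0$ is an $\mathbb R^\omega$-manifold by Theorem~\ref{mpt10}. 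Hence $\mpcc(M)$ is an inverse limit, over $\omega_1$, of $\mathbb R^\omega$-manifolds with trivial $\mathbb R^\omega$-bundle bonding maps, which by the Chigogidze theory (\cite{Chi}) is an $\mathbb R^{\omega_1}$-manifold.

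Finally I would invoke the classification of $\mathbb R^{\omega_1}$-manifolds: two such manifolds are homeomorphic iff they are homotopy equivalent. Since $\mpcc(M)$ is an $\mathbb R^{\omega_1}$-manifold homotopy equivalent to the $\mathbb R^{\omega_1}$-manifold $M$, we conclude $\mpcc(M)\cong M$, completing the proof.

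The main obstacle I anticipate is the second paragraph — verifying that $\mpcc(M)$ is genuinely an $\mathbb R^{\omega_1}$-manifold, which requires (i) that the reduction "$M$ depends on countably many coordinates'' survives application of $\mpcc$ and produces the right inverse system with metrizable threads, (ii) that the trivial-$\mathbb R^\omega$-bundle conclusion of Remark~\ref{r:1} remains valid when the fiberwise $c$-structure argument is carried out over an open subset $M_\alpha$ instead of the whole space (one must confirm that $M_\alpha$ is still a complete metric $l.c.$-space so that Theorem~\ref{t:ho} applies, and that the FDAP selections $s_i(A)=A\times\{c_i\}$ can be chosen with images inside $\mpcc(M_{\alpha+1})$, using a closed countable set $C$ lying in the relevant fiber of $M_0$), and (iii) invoking the correct spectral theorem from \cite{Chi} guaranteeing that a well-indexed $\omega_1$-system of $\mathbb R^\omega$-manifold bundles has $\mathbb R^{\omega_1}$-manifold limit. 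The homotopy-equivalence and classification steps are then routine once these points are secured.
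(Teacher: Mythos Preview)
Your proposal is correct but takes a somewhat different route from the paper. Both arguments use the key reduction that a functionally open $M\subset\mathbb R^{\omega_1}$ depends on only countably many coordinates, so $M\cong U\times\mathbb R^{\omega_1\setminus\omega}$ for some open $U\subset\mathbb R^\omega$, and both represent $\mpcc(M)$ as an $\omega_1$-indexed inverse limit whose short bonding maps are trivial $\mathbb R^\omega$-bundles over the base $\mpcc(U)\cong U$ (Theorem~\ref{mpt10}). From that point the paper simply reads off the product decomposition $\mpcc(M)\cong\mpcc(U)\times\mathbb R^{\omega_1}\cong U\times\mathbb R^{\omega_1}\cong M$ directly, with no deformation retraction and no classification theorem. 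Your version instead builds a homotopy equivalence $\mpcc(M)\simeq M$ via the $\max$-retraction and the homotopy $H$, separately argues that $\mpcc(M)$ is an $\mathbb R^{\omega_1}$-manifold, and then invokes the homotopy classification of $\mathbb R^{\omega_1}$-manifolds; this is the Theorem~\ref{mpt10} strategy lifted to weight $\omega_1$ and is valid, but less economical. Two remarks: first, your $\varepsilon$-neighbourhood worry about $H(A,t)\in\mpcc(M)$ is unnecessary (and awkward in a non-metrizable space) --- since $A$ is max-plus convex with $\max A\in A$, every $a\oplus(\ln t)\odot\max A$ is a max-plus combination of points of $A$, so $H(A,t)\subset A\subset M$ automatically. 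Second, the paper obtains the manifold structure on $\mpcc(M)$ in one line by observing that $\mpcc(M)$ is functionally open in $\mpcc(\mathbb R^{\omega_1})\cong\mathbb R^{\omega_1}$ via $\tilde f(A)=\inf_{a\in A}f(a)$; this would replace the work in your second paragraph.
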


\begin{proof} Note that the set $\mpcc M$ is also functionally open. Indeed, let $f\colon  \mathbb R^{\omega_1}\to[0,1]$ be a continuous function such that $M=f^{-1}((0,1])$. Define the function $\tilde f\colon \mathbb R^{\omega_1}\to[0,1]$ by the formula: $\tilde f(A)=\inf A$. Then, clearly, $\tilde f^{-1}((0,1])=\mpcc M$.

There exists a countable subset $S\subset \omega_1$ and a function $g\colon \mathbb R^S\to[0,1]$ such that $f=g\pr_S$. Therefore, $M=U\times \mathbb R^{\omega_1\setminus S}$. Without loss of generality, one may conclude that $S=\omega\subset\omega_1$.
We conclude that $$M=\varprojlim\{U\times \mathbb R^{\alpha\setminus\omega}, p^{\beta\setminus\omega}_{\alpha\setminus\omega};  \omega<\alpha<\beta<\omega_1 \}$$ and therefore
$$\mpcc(M)=\varprojlim\{\mpcc(U\times \mathbb R^{\alpha\setminus\omega}), \mpcc(p^{\beta\setminus\omega}_{\alpha\setminus\omega});  \omega<\alpha<\beta<\omega_1 \}.$$

Since by Theorem \ref{mpt10}, the space $\mpcc(U)$ is homeomorphic to $U$ and every projection map in the latter inverse system is soft, we conclude that $$\mpcc(M)\simeq \mpcc(U)\times \mathbb R^{\omega_1}\simeq U \times \mathbb R^{\omega_1} \simeq M.$$
\end{proof}

\begin{thm} The hyperspace $\mpcc(\mathbb R^{\tau})$ is not an absolute retract, for any $\tau>\omega_1$.
\end{thm}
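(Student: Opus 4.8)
The plan is to show that $\mpcc(\mathbb R^\tau)$ fails to be an absolute retract by exhibiting an obstruction that already lives in the factor $\mathbb R^{\omega_2}$, and then propagating it. The standard strategy for such nonmetrizable AR questions (as in \cite{ZI} and \cite{Chi}) is to use the fact that an absolute retract in this category must be an absolute extensor for all metrizable (indeed, paracompact) spaces, and in particular its ``$\omega_1$-spectrum'' — the inverse system of the images under countable-type projections — must consist of absolute retracts with soft bonding maps, and must satisfy a Shchepin-style factorization/openness condition. So the first step is to reduce to $\tau=\omega_2$: since $\mathbb R^\tau = \mathbb R^{\omega_2}\times\mathbb R^{\tau\setminus\omega_2}$ and $\mpcc$ is a continuous functor carrying products of projections to projections, a retraction onto $\mpcc(\mathbb R^\tau)$ would restrict (via the continuity of the functor and the fact that $\mpcc(\mathbb R^{\tau\setminus\omega_2})\neq\emptyset$) to show $\mpcc(\mathbb R^{\omega_2})$ is an AR; hence it suffices to prove $\mpcc(\mathbb R^{\omega_2})\notin\mathrm{AR}$.

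Next I would set up the spectral obstruction. Represent $\mathbb R^{\omega_2}$ as $\varprojlim\{(\mathbb R^\omega)^\alpha, p_{\alpha\beta}; \omega<\alpha<\beta<\omega_2\}$ as in the previous proof, so that $\mpcc(\mathbb R^{\omega_2})=\varprojlim\{\mpcc((\mathbb R^\omega)^\alpha),\mpcc(p_{\alpha\beta})\}$ by continuity of $\mpcc$. If this limit were an absolute retract, then by the general theory of nonmetrizable absolute extensors (Chigogidze's spectral theorem, \cite{Chi}) a cofinal subsystem of this $\omega_1$-spectrum would have to be an $\omega_1$-spectrum of absolute retracts with \emph{open} (equivalently, by the metrizable ANR structure of the factors, $\mathbb R^\omega$-soft) bonding projections. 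Now each single bonding map $\mpcc(p_{\alpha\beta})$ for $\alpha=\beta+1$ is, by Remark \ref{r:1}, a trivial $\ell^2$-bundle and hence open; the obstruction must therefore come from \emph{limit} stages. Concretely, I would fix a limit ordinal $\gamma$ of cofinality $\omega_1$ and examine the map $\mpcc((\mathbb R^\omega)^\gamma)\to\varprojlim_{\alpha<\gamma}\mpcc((\mathbb R^\omega)^\alpha)$; the point is that $\mpcc$ does \emph{not} commute with $\omega_1$-limits at such $\gamma$ in a way compatible with the AR property, because a max-plus convex compact set in $(\mathbb R^\omega)^\gamma$ is determined by its countably-supported projections, and the resulting ``diagonal'' compatibility forces the limit space to contain a copy of $\exp(\text{something nonmetrizable})$ or a non-AR subspace modelled on the hyperspace over $\mathbb R^{\omega_1}$ — which is exactly where the jump from $\omega_1$ to $\omega_2$ matters.

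The cleanest route, and the one I would actually write, is to quote the known negative result for ordinary hyperspaces: by \cite{ZI} (or the analogous computation in \cite{Chi}), $\exp(\mathbb R^{\omega_2})$ — indeed the hyperspace of compact sets, or of compact convex sets, of any power $\mathbb R^\tau$ with $\tau>\omega_1$ — fails to be an absolute retract, the obstruction being that the limit map at an ordinal of cofinality $\omega_1$ is not open. I would then produce a closed embedding of such a bad hyperspace as a \emph{retract} of $\mpcc(\mathbb R^{\omega_2})$: e.g. degenerate ``box-like'' max-plus convex sets $\bigoplus_{i} \alpha_i\odot\{e_i\}$ built from a fixed frame are parametrized by a simplex-like subspace on which $\mpcc$ reduces to an honest hyperspace functor, and the assignment $A\mapsto$ (its max-plus convex hull) gives a retraction. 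Since a retract of an AR is an AR, and the embedded hyperspace is not an AR, we conclude $\mpcc(\mathbb R^{\omega_2})\notin\mathrm{AR}$, hence $\mpcc(\mathbb R^\tau)\notin\mathrm{AR}$ for all $\tau>\omega_1$.

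The main obstacle I anticipate is the last construction: making precise a subspace of $\mpcc(\mathbb R^{\omega_2})$ that is genuinely homeomorphic (and a retract, via max-plus convex hull) to a non-AR hyperspace, and verifying that the non-openness obstruction survives the embedding rather than being ``smoothed out'' by the extra convexity — this is where the $\ell^2$-bundle triviality of the one-step maps could, a priori, rescue the AR property, so one must check that the limit-stage map at cofinality $\omega_1$ really does inherit the non-openness. Handling the spectral/continuity bookkeeping (cofinal subsystems, countable supports, Chigogidze's characterization) is routine but must be done carefully to ensure the obstruction is not an artifact of a bad choice of spectrum.
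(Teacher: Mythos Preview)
Your reduction to $\tau=\omega_2$ is correct and matches the paper. After that, however, your proposal never actually locates an obstruction: the ``limit stage of cofinality $\omega_1$'' paragraph ends in a hand-wave (``forces the limit space to contain a copy of $\exp(\text{something nonmetrizable})$''), and the retract route is equally undeveloped --- you do not specify which subspace of $\mpcc(\mathbb R^{\omega_2})$ you mean, nor why max-plus convex hull gives a retraction onto it, and \cite{ZI} concerns Tychonov \emph{cubes}, not powers of $\mathbb R$. As written, this is a strategy memo, not a proof, and you yourself flag the gap.

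The paper's argument is both different and much shorter, and it is worth knowing because it bypasses all the spectral analysis you anticipate. Assuming $\mpcc(\mathbb R^{\omega_2})$ is an AR, Chigogidze's characterization forces it to be homeomorphic to $\mathbb R^{\omega_2}$; the general Shchepin--Chigogidze machinery for functors then implies that $\mpcc$ must carry the basic pullback square
\[
\mathcal D=\bigl\{(\mathbb R^\omega)^3 \xrightarrow{\pr_{12},\ \pr_{13}} (\mathbb R^\omega)^2 \xrightarrow{\pr_1} \mathbb R^\omega\bigr\}
\]
to an isomorphic (in particular, pullback) square. The whole proof then reduces to a single concrete observation: $\mpcc(\mathcal D)$ is \emph{not} a pullback. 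Take $A=\{0\}\subset\mathbb R^\omega$ and let $B=C=\{0\}\times S\subset(\mathbb R^\omega)^2$ with $S$ a nontrivial segment; then both $\{0\}\times S\times S$ and its diagonal $\{0\}\times\{(s,s):s\in S\}$ lie in $\mpcc((\mathbb R^\omega)^3)$ and have the same images $B,C$ under $\mpcc(\pr_{12}),\mpcc(\pr_{13})$. That is the entire obstruction --- a failure of bicommutativity of the functor $\mpcc$ --- and it requires no analysis of cofinality-$\omega_1$ limits or embedded hyperspaces. Your instinct that ``openness'' fails somewhere in the spectrum is morally related (in Shchepin's theory bicommutativity and openness of limit projections are tied together), but the pullback counterexample is the clean way to cash it out.
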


\begin{proof} First, note that it  suffices to consider the case $\tau=\omega_2$.
Mow, recall  that $\mpcc$ is a functor acting on the category whose objects are spaces $\mathbb R^\tau$ and the morphisms are the projections. Assuming that  $\mpcc(\mathbb R^{\omega_2})$ is an absolute retract we conclude, by Chigogidze's characterization theorem \cite{Chi1}, that $\mpcc(\mathbb R^{\omega_2})$ is homeomorphic to $\mathbb R^{\omega_2}$.

By   general results concerning the functors in the category of Tychonov spaces \cite{Chi,Shch}, we obtain that any homeomorphism of $\mathbb R^{\omega_2}$ and $\mpcc(\mathbb R^{\omega_2})$ implies the isomorphism of the square diagram $$\mathcal D={\xymatrix{{(\mathbb R^\omega)}^3 \ar[r]^{\pr_{12}} \ar[d]_{\pr_{13}}& {(\mathbb R^\omega)}^2 \ar[d]^{\pr_{1}}\\ {(\mathbb R^\omega)}^2 \ar[r]_{\pr_{1}}& {\mathbb R^\omega} }},$$ where $\pr_{ij}$, $\pr_k$ denote the projections onto the corresponding factors,  and $\mpcc(\mathcal D)$.

We are going to show that the diagram $\mpcc(\mathcal D)$ is not a pullback diagram. Let $$A=\{0\}\subset \mathbb R^\omega,\ B=C=\{0\}\times \{(x_i)\mid x_0\in[0,1],\ x_i=0\text{, if }i>0\}\subset {(\mathbb R^\omega)}^2.$$ Let also $$D_1=\{0\}\times \{((x_i),(y_i))\mid x_0=y_0\text{ and } x_i=y_i=0\text{, if }i>0\}\subset {(\mathbb R^\omega)}^3,$$ then $$\mpcc(\pr_{12})(D)=\mpcc(\pr_{12})(D_1)=B,\ \mpcc(\pr_{13})(D)=\mpcc(\pr_{13})(D_1)=C.$$
Thus $\mpcc(\mathcal D)$ is not a pullback diagram and this completes the proof.
\end{proof}

\section{Epilogue}
The following question is related to Theorem \ref{mpt10}.
\begin{que}
Let $U$ be an open subset of $\mathbb R^{\omega_1}$ which is an $\mathbb R^{\omega_1}$-manifold (see \cite{Chi1} for the background of the theory of $\mathbb R^{\omega_1}$-manifolds). Is
then
$\mpcc(U)$ homeomorphic to $U$?
\end{que}

The following notion was introduced in \cite{BH1} and investigated in \cite{BH} and \cite{BH2}.
A subset $B$ of $\mathbb R^n_+$ is said to be $\mathbb B$-{\it convex} if for all $x,y \in B$ and all $t \in [0, 1]$ one has $\max(tx, y) \in B$. For the hyperspace $\mathbb B\text{-cc}(\mathbb R^n)$, $n\ge2$, of compact $\mathbb B$-convex subsets of $\mathbb R^n_+$ one can prove analogues of the results in \cite{Ba}.

One can extend this notion over an arbitrary vector lattice. Let $\ell^2_+$ denote the positive cone of the separable Hilbert space $\ell^2$. We say that a  subset $B$ of $\ell^2_+$ is $\mathbb B$-{\it convex} if for all $x,y \in B$ and all $t \in [0, 1]$ one has $\max(tx, y) \in B$.
We conjecture that the hyperspace of compact $\mathbb B$-convex subsets in $\ell^2_+$ is homeomorphic to $\ell^2$.
Analogous question can be formulated for
the nonseparable case.
\begin{que} Let $\ell^2(A)_+$ denote the positive cone  in a nonseparable Hilbert space $\ell^2(A)$. Is the hyperspace $\mathbb B\text{-cc}(\ell^2(A)_+)$ homeomorphic to $\ell^2(A)$?
\end{que}

\section*{Acknowledgements}

This research was supported by the
Slovenian Research Agency grants P1-0292-0101, 
J1-2057-0101, and J1-4144-0101.
We thank the referee for  comments and suggestions.


\begin{thebibliography}{99}
\bibitem{B} 
L. E. Bazylevych, 
{\it On the hyperspace of strictly convex bodies},
Mat. Stud. {\bf 2}(1993), 83--86.

\bibitem{Ba} 
L. E. Bazylevych,
{\it Hyperspaces of max-plus  convex compact sets},
Mat. Zametki {\bf 84}:5 (2008), 658–-666. (in Russian)

\bibitem{Ba1} 
L. E. Bazylevych, 
{\it Hyperspaces of max-plus and max-min convex sets,}
Travaux Math\'ematiques {\bf 18}(2008), 103--110.

\bibitem{BH1}
W. Briec and C. D. Horvath,
{\it $\mathbb B$-convexity},
Optimization {\bf 53}:2 (2004), 103--127.

\bibitem{BH}
W. Briec and C. Horvath,
{\it Halfspaces and Hahn-Banach like properties in $\mathbb B$-convexity and max-plus convexity},	 	
Pacif. J. Optim. {\bf 4}:2 (2008),  293--317.

\bibitem{BH2}
W. Briec, C. D. Horvath and A. M. Rubinov,
{\it Separarion in $\mathbb B$-convexity},
Pacif. J. Optim. {\bf 1}:1 (2005), 13--30.

\bibitem{Chi} 
A. Chigogidze, 
{\it Trivial bundles and near-homeomorphisms},
Fund. Math. {\bf 132}:2 (1989), 89-–98.

\bibitem{Chi1}
A. Chigogidze,
{\it Inverse Spectra},
North-Holland Mathematical Library {\bf 53},
Amsterdam 1996.

\bibitem{Ho}
C. D. Horvath,
{\it Contractibility and generalized convexity},
J. Math. Anal. Appl.  {\bf 156}:2 (1991), 341--357.

\bibitem{Ho1} 
C. D. Horvath,
{\it Extension and selection theorems in
topological spaces with a generalized convexity structure},
Ann. Fac. Sci. Toulouse Math.  {\bf (6) 2}:2 (1993),  253--269.

\bibitem{LMS} 
G. L. Litvinov, V. P. Maslov and G. B. Shpiz,
{\it Idempotent functional analysis: An algebraic approach},
Mat. Zametki {\bf 69}:5 (2001),  758--797. (in Russian);
English transl.:  Math. Notes {\bf 69}:5-6 (2001), 696-–729.

\bibitem{NQS} 
S. B. Nadler, Jr.,  J. Quinn and N. M. Stavrakas, 
{\it Hyperspace of compact convex sets,}
Pacif. J. Math.  {\bf 83}:2 (1979),  441-–462.
    
\bibitem{Shch} E. V. Shchepin,
{\it Functors and uncountable powers of compacta},
Uspekhi Mat. Nauk {\bf 31}:3 (1981), 3--62.   (in Russian)
    
\bibitem{T}
H. Toru\'{n}czyk, 
{\it Characterizing Hilbert space topology,}
Fund. Math. {\bf 111} (1981), 247--262.

\bibitem{TW} 
H. Toru\'{n}czyk and J. West, 
{\it Fibrations and Bundles with Hilbert Cube Manifold Fibers,}
Memoirs Amer. Math. Soc. {\bf  406} (1989).

\bibitem{ZI} 
M. M. Zarichnyi and S. O. Ivanov,		
{\it  Hyperspaces of convex compact subsets of the Tikhonov cube},
Ukrain. Mat, \v{Z}ur. {\bf 53}:1 (2001), 698--701. (in Ukrainian);
English transl.: Ukrain. Math. J. {\bf 53}:5 (2001), 809-–813.

\bibitem{Zim}
K. Zimmermann,
{\it A general separation theorem in extremal algebras},
Ekonom.-Mat. Obzor. {\bf 13}:2 (1977),179-–201.

\end{thebibliography}
\end{document}